\documentclass[11pt,a4paper]{article}

\usepackage[T1]{fontenc}
\usepackage[utf8]{inputenc}
\usepackage{newtxtext,newtxmath}
\usepackage[a4paper,left=2.2cm,right=2.2cm,top=2.0cm,bottom=2.0cm]{geometry}
\usepackage{microtype}
\usepackage{mathtools}
\usepackage{enumitem}
\usepackage[hidelinks]{hyperref}
\usepackage[
  backend=biber,
  style=authoryear-comp,
  maxcitenames=2,
  maxbibnames=99,
  uniquename=false,
  giveninits=true,
  doi=true,
  url=false,
  isbn=false
]{biblatex}
\usepackage{amsthm}
\theoremstyle{definition}
\newtheorem{theorem}{Theorem}
\newtheorem{corollary}[theorem]{Corollary}
\newtheorem{definition}{Definition}

\newcommand{\E}{\mathbb E}
\newcommand{\Pp}{\mathbb P}
\newcommand{\KL}{\mathrm{KL}}
\newcommand{\kl}{\mathrm{kl}}
\newcommand{\1}{\mathbf 1}
\newcommand{\X}{\mathcal X}
\newcommand{\Sstate}{\mathcal S}
\newcommand{\A}{\mathcal A}
\newcommand{\ThetaSet}{\Theta}

\title{\textbf{Statistical reduction before the target is known: two boundary results}}
\author{Rianne de Heide\\
\small Department of Applied Mathematics, University of Twente, Enschede, The Netherlands\\
\small Machine Learning Group, Centrum Wiskunde \& Informatica (CWI), Amsterdam, The Netherlands\\
\small \texttt{r.deheide@utwente.nl}}
\date{4 September 2026}

\begin{document}
\maketitle

\begin{abstract}
Suppose that the eventual use of data is not known when the data are reduced or collected. This note considers two simple boundary cases. In a finite statistical experiment, a statistic preserves the Bayes risk for every finite later decision problem if and only if it is sufficient. Hence, when the minimal sufficient statistic is one-to-one, exact preservation of all later decision problems permits no nontrivial reduction. We then consider adaptive sampling from $m$ independent Gaussian streams when an external query specifies the coordinate to be classified only after sampling stops. Under coordinatewise error control, the optimal symmetric average sample size is exactly $m$ times the one-coordinate optimum. A change-of-measure argument gives the corresponding pointwise lower bound in terms of binary relative entropy.
\end{abstract}

\section{Introduction}

When the inferential task is known in advance, reduction can be tailored to that task. If the task is specified only later, one can instead ask what has to be retained from data already observed, or how much has to be collected before the eventual target is known. This note considers one extreme case of each question.

For the first question, suppose that a reduced statistic must preserve the Bayes risk for every finite later decision problem. In a finite statistical experiment, this is exactly ordinary sufficiency. This is a finite specialization of the classical Blackwell--Bahadur theory: Blackwell's ordering compares experiments through their attainable risks, and equality corresponds to equivalence up to randomization \parencite{blackwell1951comparison,blackwell1953equivalent,torgersen1991comparison}; Bahadur developed the corresponding relation with sufficient statistics, including sequential experiments \parencite{bahadur1954sufficiency}. We give a direct finite-dimensional proof. A simple corollary gives a no-compression case: if distinct observations have nonproportional likelihood vectors, any statistic that preserves every later finite decision problem must be one-to-one on the support. Approximate preservation is related to Le Cam's deficiency \parencite{lecam1964sufficiency,torgersen1991comparison}, and recent work studies transformations that preserve minimum risk for every loss \parencite{gyorfi2023lossless}. We work with finite sample and parameter spaces throughout; in more general measurable spaces, Bayesian and classical notions of sufficiency can differ \parencite{blackwellramamoorthi1982bayes}.

For the second question, consider $m$ independent binary Gaussian experiments, one for each possible later target. The statistician may choose adaptively which coordinate to sample, but an external query names the coordinate whose sign is required only after sampling has stopped. Under a symmetric average-sample-size criterion, the optimum is exactly $m$ times the one-coordinate optimum. The reason is simple: each coordinate of any $m$-coordinate procedure can be emulated as a valid one-coordinate sequential test, while observations from the other coordinates can be generated as parameter-independent randomization.

Classical sequential testing supplies the one-coordinate benchmark \parencite{wald1947sequential,waldwolfowitz1948optimum}. Adaptive selection of experiments goes back at least to \textcite{chernoff1959sequential}, and controlled-sensing formulations allow the observation mechanism itself to be selected adaptively \parencite{nitinawarat2013controlled}. The change-of-measure argument used below is standard in sequential testing and fixed-confidence bandit theory \parencite{kaufmann2016complexity}. Sequential multiple-testing problems for independent streams usually impose stronger joint classification criteria; see, for example, \textcite{songfellouris2019sequential}. The theorem below instead uses coordinatewise error control for an external query; it does not cover a target selected from the stopped data.

\section{Bayes-risk preservation and sufficiency}

Let $\ThetaSet$ and $\X$ be finite sets and let
\[
    \mathcal E_X = \{P_\theta : \theta\in\ThetaSet\}
\]
be a statistical experiment on $\X$. Let
\[
    T:\X\to\Sstate
\]
be a statistic, where $\Sstate$ is finite, and write $Q_\theta$ for the distribution of $T(X)$ under $P_\theta$. Fix a prior $\pi$ satisfying $\pi(\theta)>0$ for every
$\theta\in\ThetaSet$, and write
\[
\bar P=\sum_{\theta\in\ThetaSet}\pi(\theta)P_\theta,
\qquad
\bar Q=\sum_{\theta\in\ThetaSet}\pi(\theta)Q_\theta.
\]

A finite decision problem consists of a finite action set $\A$ and a loss function
\[
    L:\ThetaSet\times\A\to[0,\infty).
\]
A randomized decision rule based on $X$ is a Markov kernel $\delta(a\mid x)$ from $\X$ to $\A$; a rule based on $T$ is a Markov kernel $\gamma(a\mid s)$ from $\Sstate$ to $\A$. Define the corresponding Bayes risks by
\begin{align*}
 R_X(\pi,L)
   &= \inf_\delta \sum_{\theta,x,a}
      \pi(\theta)P_\theta(x)\delta(a\mid x)L(\theta,a),\\
 R_T(\pi,L)
   &= \inf_\gamma \sum_{\theta,s,a}
      \pi(\theta)Q_\theta(s)\gamma(a\mid s)L(\theta,a).
\end{align*}
Since $T$ is a deterministic function of $X$, $R_X(\pi,L)\le R_T(\pi,L)$ for every decision problem.

In this finite setting, $T$ is sufficient for $\{P_\theta:\theta\in\ThetaSet\}$ if the conditional distribution of $X$ given $T=s$ can be chosen independently of $\theta$ for every $s$ having positive probability under at least one $P_\theta$. This is equivalent to the usual factorization and randomization characterizations of sufficiency in finite dominated experiments \parencite{bahadur1954sufficiency,torgersen1991comparison}.

The next definition is the equality case of Blackwell's decision criterion, specialized to the comparison between an experiment and a statistic generated from it \parencite{blackwell1951comparison,blackwell1953equivalent,torgersen1991comparison}.

\begin{definition}[Bayes-risk preserving statistic]
The statistic $T$ is \emph{Bayes-risk preserving at $\pi$} if
\[
      R_X(\pi,L)=R_T(\pi,L)
\]
for every finite action set $\A$ and every loss function $L$.
\end{definition}

Restricting attention to one full-support prior is enough. For any other prior $\rho$ on $\ThetaSet$, replacing $L(\theta,a)$ with $\rho(\theta)L(\theta,a)/\pi(\theta)$ turns its Bayes risk under $\rho$ into the Bayes risk under $\pi$.

\begin{theorem}[Equivalence with sufficiency]\label{thm:universal}
For the finite experiment above and a statistic $T$, the following statements are equivalent.
\begin{enumerate}[label=\textnormal{(\roman*)},leftmargin=2.2em]
\item $T$ is Bayes-risk preserving at $\pi$.
\item There exists a Markov kernel $K(x\mid s)$ from $\Sstate$ to $\X$, independent of $\theta$, such that
\[
       P_\theta(x)=\sum_{s\in\Sstate} Q_\theta(s)K(x\mid s)
       \qquad
       \text{for all }\theta\in\ThetaSet,\ x\in\X.
\]
\item $T$ is sufficient for $\{P_\theta:\theta\in\ThetaSet\}$.
\end{enumerate}
Thus equality of Bayes risk for all finite downstream decision problems is equivalent to ordinary sufficiency in this setting.
\end{theorem}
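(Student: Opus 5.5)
I will prove the cycle (iii)$\Rightarrow$(ii)$\Rightarrow$(i)$\Rightarrow$(iii). The last implication carries the content.

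\emph{(iii)$\Rightarrow$(ii).} Let $s$ have positive probability under some $P_\theta$. Take $K(\cdot\mid s)$ to be the $\theta$-free conditional law of $X$ given $T=s$ that sufficiency supplies. For every other $s$, let $K(\cdot\mid s)$ be an arbitrary distribution on $T^{-1}(s)$, or on $\X$ if that fibre is empty. Then $P_\theta(x)=Q_\theta(T(x))K(x\mid T(x))$ whenever $Q_\theta(T(x))>0$. When $Q_\theta(T(x))=0$, both sides vanish, since $P_\theta(x)\le Q_\theta(T(x))$.

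\emph{(ii)$\Rightarrow$(i).} Given any rule $\delta$ based on $X$, define $\gamma(a\mid s)=\sum_x K(x\mid s)\delta(a\mid x)$. By (ii), $\gamma$ has exactly the same risk under every $\theta$ as $\delta$. Hence $R_T\le R_X$, and the reverse inequality was noted before the definition.

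\emph{(i)$\Rightarrow$(iii).} I will use Bayes-risk preservation to show that $T$ preserves the posterior. Write $p(\theta\mid x)=\pi(\theta)P_\theta(x)/\bar P(x)$ for $\bar P(x)>0$, and $q(\theta\mid s)$ analogously. The target is
\[
p(\cdot\mid x)=q(\cdot\mid T(x)) \quad\text{whenever } \bar P(x)>0 .
\]
Granting this, $P_\theta(x)=\bar P(x)q(\theta\mid T(x))/\pi(\theta)$. Since $q(\theta\mid s)=\pi(\theta)Q_\theta(s)/\bar Q(s)$, this gives $P_\theta(x)=Q_\theta(T(x))\,\bar P(x)/\bar Q(T(x))$, which is a factorization. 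So the conditional law of $X$ given $T=s$ is $\bar P(x)\1\{T(x)=s\}/\bar Q(s)$, free of $\theta$. This argument needs $\pi>0$, and $\bar Q(s)>0$ exactly when $s$ has positive probability under some $P_\theta$.

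To get the posterior identity, I will use a finite scoring-rule decision problem. I expect this to be the main obstacle, because the action set must be finite while posteriors range over a continuum. The plan is as follows. Let $\A$ be the finite set of posterior vectors $\{p(\cdot\mid x):\bar P(x)>0\}\cup\{q(\cdot\mid s):\bar Q(s)>0\}$. Use the Brier-type loss $L(\theta,a)=\|a-e_\theta\|^2$, where $e_\theta$ is the indicator vector of $\theta$; it is nonnegative as required.

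The Bayes risk $R_X$ is $\sum_x \bar P(x)\min_{a\in\A}\sum_\theta p(\theta\mid x)\|a-e_\theta\|^2$. The inner expected loss equals $\|a-p(\cdot\mid x)\|^2$ plus a term not depending on $a$, so it is uniquely minimized at $a=p(\cdot\mid x)$, and that point lies in $\A$. The same holds for $T$: the posterior risk at $s$ is minimized uniquely at $q(\cdot\mid s)\in\A$.

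Next I will write the risk of the $T$-optimal rule as a rule on $X$, namely $x\mapsto q(\cdot\mid T(x))$. Its excess risk over the $X$-optimal rule is
\[
\sum_x \bar P(x)\,\bigl\|q(\cdot\mid T(x))-p(\cdot\mid x)\bigr\|^2 .
\]
Hypothesis (i) forces this excess to be zero. Therefore $q(\cdot\mid T(x))=p(\cdot\mid x)$ whenever $\bar P(x)>0$, which completes the cycle.
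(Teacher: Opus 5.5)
Your proof is correct. Your (ii)$\Rightarrow$(i) and (iii)$\Rightarrow$(ii) match the paper's, but the key direction takes a genuinely different route.

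The paper proves (i)$\Rightarrow$(ii) by Blackwell's randomization criterion. If $P$ lies outside the compact convex set $\{QK\}$, a strictly separating matrix $H$ gives a loss $L(\theta,a)=C-h_{\theta a}/\pi(\theta)$ on $\A=\X$ under which the identity rule beats every $T$-based rule. The paper then proves (ii)$\Rightarrow$(iii) separately, from equality in $\KL(P_\theta\,\|\,\bar P)\ge\KL(Q_\theta\,\|\,\bar Q)$ together with the chain rule.

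You instead go directly from (i) to (iii) through the Bayesian characterization of sufficiency. The Brier loss on the finite set of posterior vectors turns $R_T-R_X$ into $\sum_x\bar P(x)\|q(\cdot\mid T(x))-p(\cdot\mid x)\|^2$. Equality then forces $p(\cdot\mid x)=q(\cdot\mid T(x))$ on the support of $\bar P$, and the factorization follows.

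Your route is elementary and constructive. It needs no separation theorem and no relative entropy, and it names an explicit, canonical decision problem that detects any failure of sufficiency. In fact, taking $\A$ to be the posteriors given all nonempty subsets of the support of $\bar P$ gives one finite problem that works for every statistic $T$ at once. It does rely on $T$ being deterministic, both to realize $x\mapsto q(\cdot\mid T(x))$ as a rule based on $X$ and to read off the factorization.

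The paper's separation step never uses determinism. It shows that whenever an experiment $\{Q_\theta\}$ does at least as well as $\{P_\theta\}$ in every finite decision problem, $P_\theta=Q_\theta K$ for a single kernel $K$, whether or not $Q_\theta$ arises from $P_\theta$ through a statistic. It also delivers the kernel in (ii) directly. The cost is a non-explicit loss and an extra entropy argument to get back to sufficiency.
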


\begin{proof}
Assume (ii). For a randomized decision rule $\delta(a\mid x)$ based on $X$, define
\[
   \gamma(a\mid s)=\sum_{x\in\X}K(x\mid s)\delta(a\mid x).
\]
For every $\theta$, the action distribution obtained by applying $\gamma$ to $T(X)$ equals the action distribution obtained by applying $\delta$ to $X$. The two rules therefore have the same Bayes expected loss. Together with $R_X(\pi,L)\le R_T(\pi,L)$, this proves (i).

Now suppose (ii) fails. Write $P=(P_\theta(x))_{\theta,x}$ and $Q=(Q_\theta(s))_{\theta,s}$, and consider
\[
   \mathcal C
   =\{QK:K\text{ is a Markov kernel from }\Sstate\text{ to }\X\}.
\]
The set $\mathcal C$ is compact and convex in $\mathbb R^{\ThetaSet\times\X}$. Since $P\notin\mathcal C$, strict separation gives a matrix $H=(h_{\theta x})$ satisfying
\[
   \sum_{\theta,x} h_{\theta x}P_\theta(x)
   >
   \sup_K \sum_{\theta,x} h_{\theta x}(QK)_\theta(x).
\]
Take $\A=\X$. Since the sets are finite and $\pi$ has full support, choose $C$ large enough that
\[
    L(\theta,a)=C-\frac{h_{\theta a}}{\pi(\theta)}
\]
is nonnegative for every $(\theta,a)$. With the full observation, the identity rule $a=X$ has Bayes expected loss
\[
    C-\sum_{\theta,x}h_{\theta x}P_\theta(x).
\]
A randomized rule based on $T$ is exactly a Markov kernel from $\Sstate$ to $\X$ and has Bayes expected loss
\[
    C-\sum_{\theta,x}h_{\theta x}(QK)_\theta(x).
\]
The separating inequality therefore makes the loss of the identity rule strictly smaller than the loss of every rule based on $T$. Hence $R_X(\pi,L)<R_T(\pi,L)$, contradicting (i). This establishes (i)$\Rightarrow$(ii). This separation argument is the finite-dimensional form of Blackwell's randomization criterion \parencite{blackwell1951comparison,blackwell1953equivalent}.

We next relate (ii) to ordinary sufficiency. Suppose first that $T$ is sufficient. 
For each $s$ with $\bar Q(s)>0$, sufficiency implies that the conditional distribution $P_\theta(X\in\cdot\mid T=s)$ is the same for every $\theta$ with $Q_\theta(s)>0$. Define $K(\cdot\mid s)$ to be this common conditional distribution, equivalently $\bar P(\cdot\mid T=s)$; for states with $\bar Q(s)=0$, define $K(\cdot\mid s)$ arbitrarily. Then $P_\theta=Q_\theta K$ for every $\theta$, which is (ii).

Conversely, suppose (ii). The statistic map sends $P_\theta$ to $Q_\theta$ and $\bar P$ to $\bar Q$. The data-processing inequality for relative entropy therefore gives
\begin{equation}
   \KL(P_\theta\,\|\,\bar P)
   \ge
   \KL(Q_\theta\,\|\,\bar Q).
   \label{eq:dpi-forward}
\end{equation}
Under (ii), the kernel $K$ sends $Q_\theta$ to $P_\theta$ and $\bar Q$ to $\bar P$, so a second application gives the reverse inequality. Equality holds in \eqref{eq:dpi-forward}. Because $T$ is deterministic, the relative-entropy chain rule yields
\[
 \KL(P_\theta\,\|\,\bar P)
 =
 \KL(Q_\theta\,\|\,\bar Q)
 +
 \sum_{s:\,Q_\theta(s)>0}Q_\theta(s)
 \KL\!\left(P_\theta(\,\cdot\mid T=s)\,\middle\|\,
              \bar P(\,\cdot\mid T=s)\right).
\]
All terms in the sum are nonnegative. Hence, whenever $Q_\theta(s)>0$,
\[
    P_\theta(\,\cdot\mid T=s)
    =
    \bar P(\,\cdot\mid T=s).
\]
The conditional distribution of $X$ given $T$ is independent of $\theta$, so $T$ is sufficient. This proves (iii).
\end{proof}

Theorem~\ref{thm:universal} is a finite specialization of the classical equivalence between decision-theoretic comparison and sufficiency; see \textcite{bahadur1954sufficiency} and \textcite{torgersen1991comparison} for general treatments. Its consequence for compression can be expressed directly through likelihood vectors. Let
\[
    \X_0=\{x\in\X:\bar P(x)>0\},
    \qquad
    \ell(x)=\bigl(P_\theta(x):\theta\in\ThetaSet\bigr),\quad x\in\X_0.
\]

\begin{corollary}[Injectivity under singleton minimal-sufficiency classes]\label{cor:no-compression}
Assume that for all distinct $x,x'\in\X_0$, the likelihood vectors $\ell(x)$ and $\ell(x')$ are not proportional. Every Bayes-risk preserving statistic $T$ is then injective on $\X_0$.
\end{corollary}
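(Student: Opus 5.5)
By Theorem~\ref{thm:universal}, a Bayes-risk preserving statistic $T$ is sufficient. It is therefore enough to show that a sufficient statistic cannot merge two distinct points of $\X_0$ when their likelihood vectors are not proportional. I will argue by contradiction. Suppose $x\neq x'$ in $\X_0$ satisfy $T(x)=T(x')=s$. Since $\bar P(x)>0$, we get $\bar Q(s)\ge \bar P(x)>0$, so $s$ has positive probability under some $P_\theta$. Let $K(\cdot\mid s)$ be the $\theta$-free conditional distribution of $X$ given $T=s$ supplied by sufficiency. Equivalently, by (ii), take the kernel with $P_\theta=Q_\theta K$; as in the proof of the theorem, $K(\cdot\mid s)$ can be taken to be $\bar P(\cdot\mid T=s)$.

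The key step is to establish the factorization $P_\theta(y)=Q_\theta(T(y))\,K(y\mid T(y))$ for every $\theta$ and every $y$. When $Q_\theta(s)>0$, this follows from the definition of the conditional distribution: $P_\theta(y)=Q_\theta(s)P_\theta(y\mid T=s)=Q_\theta(s)K(y\mid s)$. When $Q_\theta(s)=0$, both sides vanish, because $P_\theta(y)\le Q_\theta(T(y))$. Applying this to $y=x$ and $y=x'$ with the common value $s$ gives
\[
\ell(x)=K(x\mid s)\,\bigl(Q_\theta(s)\bigr)_{\theta\in\ThetaSet},\qquad
\ell(x')=K(x'\mid s)\,\bigl(Q_\theta(s)\bigr)_{\theta\in\ThetaSet}.
\]

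Both scalars are strictly positive. Indeed, if $K(x\mid s)=0$, then $\ell(x)=0$, which contradicts $\bar P(x)>0$; the same argument applies to $x'$. Hence $\ell(x)=\bigl(K(x\mid s)/K(x'\mid s)\bigr)\,\ell(x')$, so the two likelihood vectors are proportional. This contradicts the hypothesis, and therefore $T$ is injective on $\X_0$.

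The only delicate point is the coordinates $\theta$ with $Q_\theta(s)=0$. There the conditional distribution given $T=s$ is undefined, and proportionality has to be checked by hand. The bound $P_\theta(y)\le Q_\theta(T(y))$ handles this: on those coordinates both vectors are zero, so the factorization holds there as well. A second point to check is that the kernel is $\theta$-independent at $s$. This is guaranteed because $\bar Q(s)>0$, so $s$ lies among the states where the definition of sufficiency applies.
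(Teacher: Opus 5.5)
Your proof is correct and takes essentially the same approach as the paper: you use Theorem~\ref{thm:universal} to get sufficiency, write $P_\theta(x)=Q_\theta(s)\,\bar P(x\mid T=s)$ for every $\theta$, and conclude that $\ell(x)$ and $\ell(x')$ are proportional. Your explicit handling of the coordinates with $Q_\theta(s)=0$ and of the positivity of the scalars spells out what the paper's ``for all $\theta$'' and its ratio $\bar P(x)/\bar P(x')$ leave implicit. One small caution on the ``equivalently, by (ii)'' phrasing: an arbitrary kernel satisfying (ii) need not equal $\bar P(\cdot\mid T=s)$ and need not satisfy your factorization, so the argument depends on choosing the conditional-distribution kernel, which is what you in fact do.
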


\begin{proof}
By Theorem~\ref{thm:universal}, $T$ is sufficient. Fix $s$ with $\bar Q(s)>0$. For every $x\in\X_0$ satisfying $T(x)=s$, sufficiency gives
\[
   P_\theta(x)
   =Q_\theta(s)\bar P(x\mid T=s)
   =\frac{\bar P(x)}{\bar Q(s)}Q_\theta(s)
   \qquad\text{for all }\theta.
\]
If $T(x)=T(x')=s$, then
\[
   P_\theta(x)
   =\frac{\bar P(x)}{\bar P(x')}P_\theta(x')
   \qquad\text{for all }\theta,
\]
so $\ell(x)$ and $\ell(x')$ are proportional. The assumption therefore implies $x=x'$.
\end{proof}

The proportional-likelihood-vector equivalence classes are the usual minimal-sufficiency classes in a finite dominated experiment. Corollary~\ref{cor:no-compression} therefore says that the minimal sufficient statistic is the identity, up to relabelling, when all these classes are singletons \parencite{lehmannscheffe1950completeness,bahadur1954sufficiency,torgersen1991comparison}.

\section{Post-sampling target specification in independent Gaussian streams}

We now consider the acquisition question in a separable model. We use a classical sequential formulation in which the sampling rule and stopping time are part of the procedure. Validity is required for that procedure, not uniformly over other stopping rules. Fix $\Delta>0$ and $\alpha\in(0,1/2)$.

For the one-coordinate benchmark, let the unknown sign be
\[
\sigma \in \{-1,+1\},
\]
and suppose that
\[
Z_n \sim N(\sigma\Delta,1),
\qquad n\geq 1,
\]
independently. Write $\Pp_\sigma$ and $\E_\sigma$ for probability and
expectation when the true sign is $\sigma$.

A randomized sequential test consists of a stopping time $\nu$ and a
terminal decision $\widehat{\sigma}\in\{-1,+1\}$. It is
$\alpha$-correct if
\[
\Pp_{+}(\widehat{\sigma}=-1)\leq \alpha,
\qquad
\Pp_{-}(\widehat{\sigma}=+1)\leq \alpha.
\]
Define
\[
C_1(\alpha,\Delta)
=
\inf
\frac12\left(\E_{+}[\nu]+\E_{-}[\nu]\right),
\]
where the infimum is over all $\alpha$-correct randomized sequential
tests. This is a classical simple-versus-simple sequential testing problem \parencite{wald1947sequential,waldwolfowitz1948optimum}.

For the $m$-coordinate problem, let
\[
      \sigma=(\sigma_1,\ldots,\sigma_m)\in\{-1,+1\}^m.
\]
Write $\Pp_\sigma$ and $\E_\sigma$ for probability and expectation
under configuration $\sigma$.
At calendar time $t$, before the next observation is drawn, a sampling rule chooses $A_t\in\{1,\ldots,m\}$ using the past observations and parameter-independent internal randomization. Conditional on the past and on $A_t$, the next observation satisfies
\[
      Y_t\sim N(\sigma_{A_t}\Delta,1),
\]
and the observations are conditionally independent across times. Let $\tau$ be a stopping time that is almost surely finite under every $\sigma$, and let $S$ be any random element measurable with respect to the stopped history. Write
\[
      N_j(\tau)=\sum_{t=1}^{\tau}\1\{A_t=j\}
\]
for the number of samples from coordinate $j$.

After stopping, an external query may specify a target coordinate $j$. For each $j$ there is a measurable decoder $d_j$ with
\[
      \widehat\sigma_j=d_j(S)\in\{-1,+1\}.
\]
We assume that the query is external to the stopped data $S$. The condition below guarantees validity for each fixed coordinate that might later be requested, but not for a target selected as a function of $S$; the latter is a post-selection or selective-inference problem \parencite{berk2013valid,benjamini2014selective}. We require
\begin{equation}
      \sup_{\sigma\in\{-1,+1\}^m}
      \Pp_\sigma(\widehat\sigma_j\neq\sigma_j)
      \le\alpha,
      \qquad j=1,\ldots,m.
      \label{eq:uniform-correctness}
\end{equation}
Define the symmetric average sample complexity
\[
   C_m^{\mathrm{post}}(\alpha,\Delta)
   =
   \inf
   2^{-m}\sum_{\sigma\in\{-1,+1\}^m}\E_\sigma[\tau],
\]
where the infimum is over all procedures satisfying \eqref{eq:uniform-correctness}.

\begin{theorem}[Exact additive acquisition cost]\label{thm:linear-price}
For every integer $m\ge1$,
\[
      C_m^{\mathrm{post}}(\alpha,\Delta)
      =
      m\,C_1(\alpha,\Delta).
\]
\end{theorem}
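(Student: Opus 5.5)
We prove the two inequalities separately. The upper bound $C_m^{\mathrm{post}}\le m\,C_1$ comes from running optimal one-coordinate tests coordinate by coordinate. The lower bound $C_m^{\mathrm{post}}\ge m\,C_1$ comes from emulating each coordinate of an arbitrary $m$-coordinate procedure as a one-coordinate test, as sketched in the introduction.

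\emph{Upper bound.} Fix $\varepsilon>0$ and an $\alpha$-correct one-coordinate test $(\nu,\widehat\sigma)$ with $\tfrac12(\E_+[\nu]+\E_-[\nu])\le C_1+\varepsilon$.
\begin{itemize}
\item If $C_1=\infty$ there is nothing to prove, so assume $\E_\pm[\nu]<\infty$; then $\nu$ is almost surely finite.
\item Run independent copies of the test sequentially: first on coordinate $1$ until its stopping time, then on coordinate $2$, and so on. Take $S$ to be the whole history and let $d_j$ return the $j$th test's decision.
\item Since the streams are independent, the error for coordinate $j$ under $\sigma$ depends only on $\sigma_j$, so it is at most $\alpha$ and \eqref{eq:uniform-correctness} holds.
\item We have $\E_\sigma[\tau]=\sum_j \E_{\sigma_j}[\nu]$. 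Averaging over $\sigma$ uniform gives $\sum_j \tfrac12(\E_++\E_-)[\nu]\le m(C_1+\varepsilon)$.
\item Let $\varepsilon\to0$.
\end{itemize}

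\emph{Lower bound.} Take any valid procedure with finite average cost (otherwise the bound is trivial), and fix a coordinate $j$. We build a one-coordinate randomized sequential test from it.
\begin{itemize}
\item Draw $\sigma_{-j}$ uniformly from $\{-1,+1\}^{m-1}$ as internal randomization.
\item Run the $m$-coordinate procedure. Whenever it requests coordinate $j$, feed it the next real observation $Z_n$. Whenever it requests $k\neq j$, feed it a simulated $N(\sigma_k\Delta,1)$ variable.
\item Stop at $\tau$ and output $d_j(S)$.
\end{itemize}

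The number of real observations used is $\nu=N_j(\tau)$. This is a stopping time with respect to the real observations plus independent parameter-free randomization. The point is that the decision to request the next real sample is made from past real data, simulated data, and internal coins only.

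Under true sign $s$, the emulated history has the law of the $m$-coordinate procedure under the mixture of $\Pp_{(s,\sigma_{-j})}$ over uniform $\sigma_{-j}$. Hence the error probability is an average of $\Pp_\sigma(\widehat\sigma_j\ne\sigma_j)\le\alpha$, so the emulated test is $\alpha$-correct. Therefore
\[
C_1\le \tfrac12\sum_{s}2^{-(m-1)}\sum_{\sigma_{-j}}\E_{(s,\sigma_{-j})}[N_j(\tau)]=2^{-m}\sum_\sigma \E_\sigma[N_j(\tau)].
\]
Summing over $j$ and using $\sum_j N_j(\tau)=\tau$ gives $m\,C_1\le 2^{-m}\sum_\sigma\E_\sigma[\tau]$. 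Taking the infimum over procedures yields the claim.

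\emph{Main obstacle.} The delicate step is checking that the emulation is a legitimate randomized sequential test in the class over which $C_1$ is defined. Two things are needed.
\begin{itemize}
\item Its stopping decision after $n$ real samples must depend only on $Z_1,\dots,Z_n$ and auxiliary randomness independent of $\sigma$. This requires that the simulated draws and internal coins can be generated in advance, as an independent sequence indexed by calendar time, so that the joint law matches the model.
\item $\nu$ must be almost surely finite. This holds because $\nu\le\tau<\infty$ almost surely under every $\sigma$.
\end{itemize}
If randomized tests in the definition of $C_1$ are read as allowing an independent uniform variable at the outset, the pre-generated auxiliary sequence fits that definition directly.
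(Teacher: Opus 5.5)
Your proposal is correct and follows essentially the same route as the paper. The upper bound runs independent near-optimal one-coordinate tests sequentially on each coordinate. The lower bound emulates coordinate $j$ as a one-coordinate test by feeding the procedure simulated streams with uniformly drawn auxiliary signs, and then sums $2^{-m}\sum_\sigma \E_\sigma[N_j(\tau)] \ge C_1(\alpha,\Delta)$ over $j$ using $\tau=\sum_j N_j(\tau)$. Your remarks on almost-sure finiteness of $\nu\le\tau$, and on pre-generating the auxiliary randomness so that $N_j(\tau)$ is a randomized stopping time for the genuine stream, match the measurability point the paper makes explicitly.
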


\begin{proof}
For the upper bound, fix $\varepsilon>0$ and take an $\alpha$-correct one-coordinate procedure with symmetric average sample size at most $C_1(\alpha,\Delta)+\varepsilon$. Apply an independent copy of this procedure to each coordinate, sequentially in any fixed order, and retain all $m$ terminal decisions. The coordinatewise guarantee \eqref{eq:uniform-correctness} holds. Averaging over the uniform distribution on $\{-1,+1\}^m$ gives expected total sample size at most
\[
       m\bigl(C_1(\alpha,\Delta)+\varepsilon\bigr).
\]
Letting $\varepsilon\downarrow0$ proves
$C_m^{\mathrm{post}}\le mC_1$.

For the reverse inequality, consider any procedure satisfying \eqref{eq:uniform-correctness} and fix $j\in\{1,\ldots,m\}$. We construct a one-coordinate procedure. Draw auxiliary signs
\[
      U_k\in\{-1,+1\},\qquad k\neq j,
\]
independently and uniformly, and generate independent auxiliary Gaussian streams with means $U_k\Delta$ for these coordinates. All auxiliary signs, streams, and randomization used by the emulation are independent of the genuine sign $\sigma$ and may therefore be viewed as parameter-independent internal randomization of the one-coordinate procedure. Emulate the $m$-coordinate procedure. Whenever it requests coordinate $k\neq j$, supply the next observation from the corresponding auxiliary stream. Whenever it requests coordinate $j$, supply the next genuine observation from the one-coordinate stream $N(\sigma\Delta,1)$. When the emulated procedure stops, output $d_j(S)$. If the independent auxiliary randomness is included in the internal-randomization sigma-field, the number of genuine observations used is a randomized stopping time for the genuine stream and the output is measurable at that time.

Conditional on $\sigma$ and $(U_k)_{k\neq j}$, the emulated stopped
history has the same distribution as the original $m$-coordinate
procedure under the parameter vector whose $j$th entry is $\sigma$ and
whose $k$th entry is $U_k$ for every $k\neq j$.Therefore its error probability is at most $\alpha$ for either value of $\sigma$. Its number of genuine observations equals $N_j(\tau)$ in the emulated experiment. Consequently, its symmetric average expected sample size is
\[
  2^{-m}\sum_{\sigma'\in\{-1,+1\}^m}
  \E_{\sigma'}[N_j(\tau)].
\]
By the definition of $C_1(\alpha,\Delta)$,
\[
  2^{-m}\sum_{\sigma'}
  \E_{\sigma'}[N_j(\tau)]
  \ge C_1(\alpha,\Delta).
\]
This inequality holds for every $j$. Since $\tau=\sum_{j=1}^mN_j(\tau)$,
\[
  2^{-m}\sum_{\sigma'}\E_{\sigma'}[\tau]
  \ge m\,C_1(\alpha,\Delta).
\]
Taking the infimum over all $m$-coordinate procedures satisfying
\eqref{eq:uniform-correctness} proves the claim.
\end{proof}

The identity uses only coordinatewise error control, not a simultaneous guarantee. In particular, Theorem~\ref{thm:linear-price} is an exact finite-sample identity for every $(\alpha,\Delta,m)$ under this external-query formulation, rather than an asymptotic rate statement. The factor $m$ comes from the separable information structure: every coordinate must remain answerable, while samples from one coordinate carry no information about another. It is therefore not a general price for an unknown future target. Sequential multiple-testing work treats stronger joint criteria and more general information structures; see, for example, \textcite{songfellouris2019sequential}. The following corollary gives an explicit information lower bound for the Gaussian model.

For $p,q\in(0,1)$, write
\[
   \kl(p,q)=p\log\frac pq+(1-p)\log\frac{1-p}{1-q}.
\]

\begin{corollary}[Information lower bound]\label{cor:kl-bound}
Every $\alpha$-correct one-coordinate procedure satisfies
\[
   \E_+[\nu]\ge
   \frac{\kl(1-\alpha,\alpha)}{2\Delta^2},
   \qquad
   \E_-[\nu]\ge
   \frac{\kl(1-\alpha,\alpha)}{2\Delta^2}.
\]
Hence
\[
   C_m^{\mathrm{post}}(\alpha,\Delta)
   \ge
   \frac{m\,\kl(1-\alpha,\alpha)}{2\Delta^2}.
\]
For every $m$-coordinate procedure satisfying
\eqref{eq:uniform-correctness}, every $\sigma\in\{-1,+1\}^m$, and every $j$,
\[
   \E_\sigma[N_j(\tau)]
   \ge
   \frac{\kl(1-\alpha,\alpha)}{2\Delta^2},
\]
and consequently
\[
   \E_\sigma[\tau]
   \ge
   \frac{m\,\kl(1-\alpha,\alpha)}{2\Delta^2}.
\]
\end{corollary}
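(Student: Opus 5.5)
The argument is the standard change-of-measure (transportation) inequality for the one-coordinate problem, followed by the emulation reduction from the proof of Theorem~\ref{thm:linear-price} to transfer it to each coordinate.

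\emph{One-coordinate bound.} Fix an $\alpha$-correct randomized test $(\nu,\widehat\sigma)$. First assume $\E_+[\nu]<\infty$; otherwise the bound is trivial. The law of the stopped history $(W,Z_1,\ldots,Z_\nu)$ is taken under $\Pp_+$ and under $\Pp_-$, where $W$ is the internal randomization, which has the same law under both. The log-likelihood ratio of the stopped history is
\[
\Lambda_\nu=\sum_{n=1}^{\nu}\log\frac{\phi(Z_n-\Delta)}{\phi(Z_n+\Delta)}.
\]
Each increment has mean $\KL(N(\Delta,1)\,\|\,N(-\Delta,1))=2\Delta^2$ under $\Pp_+$. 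Wald's identity then gives $\KL(\Pp_+^{\mathcal F_\nu}\,\|\,\Pp_-^{\mathcal F_\nu})=2\Delta^2\E_+[\nu]$; this step needs $\E_+[\nu]<\infty$. Apply the data-processing inequality, already used in the proof of Theorem~\ref{thm:universal}, to the indicator of the event $E=\{\widehat\sigma=+1\}$. This gives
\[
2\Delta^2\E_+[\nu]\ge \kl\bigl(\Pp_+(E),\Pp_-(E)\bigr),
\]
with $\Pp_+(E)\ge1-\alpha$ and $\Pp_-(E)\le\alpha$. Since $\alpha<1/2$, we have $\alpha<1-\alpha$, and $\kl(p,q)$ is nondecreasing in $p$ and nonincreasing in $q$ on the region $p\ge q$. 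Hence the right-hand side is at least $\kl(1-\alpha,\alpha)$. Exchanging the roles of $+$ and $-$ gives the bound for $\E_-[\nu]$, using the symmetry $\kl(\alpha,1-\alpha)=\kl(1-\alpha,\alpha)$.

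\emph{Transfer to $m$ coordinates.} Averaging the two one-coordinate bounds gives $C_1(\alpha,\Delta)\ge \kl(1-\alpha,\alpha)/(2\Delta^2)$. Theorem~\ref{thm:linear-price} then yields the bound on $C_m^{\mathrm{post}}$.

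For the pointwise statement, fix $\sigma$ and $j$, and run the emulation from the proof of Theorem~\ref{thm:linear-price} with one change. Instead of random signs $U_k$, use deterministic auxiliary streams with means $\sigma_k\Delta$ for $k\neq j$. These are still parameter-independent with respect to the genuine sign of coordinate $j$. Under genuine sign $s$, the emulated history has the law of the original procedure at the configuration equal to $\sigma$ with its $j$th entry replaced by $s$. By \eqref{eq:uniform-correctness}, this gives an $\alpha$-correct one-coordinate test whose genuine sample count is $N_j(\tau)$. Under genuine sign $\sigma_j$, its expected sample count is exactly $\E_\sigma[N_j(\tau)]$. The one-coordinate bound for the sign $\sigma_j$ therefore gives $\E_\sigma[N_j(\tau)]\ge\kl(1-\alpha,\alpha)/(2\Delta^2)$. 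Summing over $j$ and using $\tau=\sum_j N_j(\tau)$ gives the bound on $\E_\sigma[\tau]$.

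\emph{Main obstacle.} The delicate step is the rigorous change of measure for a randomized stopping time. One must check that the internal randomization contributes no likelihood-ratio term, because it has the same law under both hypotheses. One must also justify Wald's identity when $\E_+[\nu]<\infty$, which follows since the increments are i.i.d.\ with finite mean. Finally, the case $\E_+[\nu]=\infty$ is handled separately and is trivial. A cleaner route is to cite the standard lemma of \textcite{kaufmann2016complexity}, which states
\[
\sum_a\E[N_a]\,\KL_a\ge\kl\bigl(\Pp(E),\Pp'(E)\bigr),
\]
and to apply it directly in the $m$-coordinate model with the alternative obtained by flipping $\sigma_j$. Only arm $j$ differs between the two models, so this gives the pointwise bound at once and avoids the emulation.
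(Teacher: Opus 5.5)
Your argument is correct. The one-coordinate bound matches the paper exactly: change of measure on the stopped transcript, Wald's identity, data processing to the indicator of $\{\widehat\sigma=+1\}$, and monotonicity of $\kl$. So does the step from $C_1(\alpha,\Delta)\ge\kl(1-\alpha,\alpha)/(2\Delta^2)$ and Theorem~\ref{thm:linear-price} to the bound on $C_m^{\mathrm{post}}$.

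You depart from the paper in the pointwise bound on $\E_\sigma[N_j(\tau)]$.
\begin{itemize}
\item You freeze the auxiliary signs at $\sigma_k$ in the emulation from Theorem~\ref{thm:linear-price}, then apply the one-coordinate bound under the sign $\sigma_j$.
\item The paper takes the route you list only as an alternative. It does a direct change of measure in the $m$-coordinate model between $\sigma$ and $\sigma^{(j)}$. It shows that the sampling-rule and internal-randomization factors cancel, and obtains $\KL(\Pp_\sigma^\tau\,\|\,\Pp_{\sigma^{(j)}}^\tau)=2\Delta^2\E_\sigma[N_j(\tau)]$ by predictable summation over the times with $A_t=j$.
\end{itemize}

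Your reduction has three advantages. It needs only the i.i.d.\ likelihood-ratio computation, because the adaptive allocation is absorbed into parameter-independent randomization. It reuses the measurability check already made for Theorem~\ref{thm:linear-price}, namely that $N_j(\tau)$ is a randomized stopping time of the genuine stream. And it proves more than required: $\E_\sigma[N_j(\tau)]$ is the expected sample size, under sign $\sigma_j$, of an $\alpha$-correct one-coordinate test. Hence any pointwise lower bound for one-coordinate tests transfers coordinate by coordinate, not just the information bound.

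The paper's direct route gives the exact divergence identity inside the adaptive model. It also does not rely on simulating the other coordinates independently of the tested parameter. That makes it the form that extends to alternatives differing in several coordinates, or to observation models where samples are informative about more than one target.
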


\begin{proof}
Consider a one-coordinate procedure and the event
\[
      B=\{\widehat\sigma=+1\}.
\]
Let $\Pp_+^\nu$ and $\Pp_-^\nu$ denote the laws of the complete stopped transcript, including the parameter-independent internal randomization. By data processing through the map that records only $\1_B$,
\[
   \KL(\Pp_+^\nu\,\|\,\Pp_-^\nu)
   \ge
   \kl\bigl(\Pp_+(B),\Pp_-(B)\bigr).
\]
The error constraints imply $\Pp_+(B)\ge1-\alpha$ and $\Pp_-(B)\le\alpha$. Since $\alpha<1/2$ and binary relative entropy is increasing in its first argument and decreasing in its second on the region $p>q$,
\begin{equation}
   \KL(\Pp_+^\nu\,\|\,\Pp_-^\nu)
   \ge
   \kl(1-\alpha,\alpha).
   \label{eq:binary-kl}
\end{equation}
For one observation $Z\sim N(\Delta,1)$,
\[
   \log\frac{dN(\Delta,1)}{dN(-\Delta,1)}(Z)=2\Delta Z,
\]
and its expectation under $N(\Delta,1)$ is $2\Delta^2$. If $\E_+[\nu]<\infty$, the likelihood ratio of the stopped transcript is the product of the observation likelihood ratios up to $\nu$; the internal randomization has the same law under both hypotheses and cancels. Hence
\[
   \KL(\Pp_+^\nu\,\|\,\Pp_-^\nu)
   =
   \E_+\!\left[\sum_{n=1}^{\nu}2\Delta Z_n\right].
\]
Because $\{\nu\ge n\}$ is measurable before $Z_n$ is observed,
\begin{align*}
   \E_+\!\left[\sum_{n=1}^{\nu}2\Delta Z_n\right]
   &=2\Delta\sum_{n\ge1}
      \E_+[\1\{\nu\ge n\}Z_n]\\
   &=2\Delta^2\sum_{n\ge1}\Pp_+(\nu\ge n)\\
   &=2\Delta^2\E_+[\nu].
\end{align*}
Absolute integrability follows from $\E_+[\nu]<\infty$ and the finite first moment of the Gaussian distribution. Combining this identity with \eqref{eq:binary-kl} gives the first lower bound. If $\E_+[\nu]=\infty$, the bound is immediate. Interchanging the two hypotheses gives the bound under $\sigma=-1$.

For the pointwise $m$-coordinate statement, fix $\sigma$ and $j$, and
let $\sigma^{(j)}$ be the vector obtained by reversing only the $j$th
sign. Let $\Pp_\sigma^\tau$ and $\Pp_{\sigma^{(j)}}^\tau$ denote the
laws of the complete stopped transcript under these two configurations,
including the parameter-independent internal randomization. The two stopped experiments differ only in observations obtained when $A_t=j$. The sampling rule and internal randomization are parameter-independent conditional on the observed past, so their factors cancel from the likelihood ratio. The stopped log-likelihood ratio is therefore
\[
  \sum_{t=1}^{\tau}
  \1\{A_t=j\}
  \log
  \frac{dN(\sigma_j\Delta,1)}{dN(-\sigma_j\Delta,1)}(Y_t).
\]
When $A_t=j$, the conditional expected increment under $\sigma$ is $2\Delta^2$. The same predictable-summation argument yields
\[
   \KL(\Pp_\sigma^\tau\,\|\,\Pp_{\sigma^{(j)}}^\tau)
   =2\Delta^2\E_\sigma[N_j(\tau)]
\]
whenever the expectation is finite. This is the standard sequential change-of-measure identity used in fixed-confidence lower bounds; compare, for example, Lemma~1 of \textcite{kaufmann2016complexity}. The event $\{d_j(S)=\sigma_j\}$ has probability at least $1-\alpha$ under $\sigma$ and at most $\alpha$ under $\sigma^{(j)}$. Applying binary data processing as in \eqref{eq:binary-kl} proves the lower bound for $N_j(\tau)$. Summing over $j$ gives the bound for $\E_\sigma[\tau]$.
\end{proof}

\section{Discussion}

The first result gives an extreme answer to the retention question. If one insists on exact preservation of Bayes risk for every finite decision problem, nothing weaker than sufficiency is enough. When the minimal-sufficiency classes are singletons, Corollary~\ref{cor:no-compression} leaves no compression except relabelling. This conclusion depends on requiring exact preservation for every finite decision problem. With a restricted class of losses, or with approximate preservation, coarser reductions may be possible; classical deficiency treats the latter question \parencite{lecam1964sufficiency,torgersen1991comparison}, and recent work studies universally lossless and loss-specific representations \parencite{gyorfi2023lossless,sevetlidis2026bayes}.

The Gaussian example gives a different extreme. Every coordinate may be queried later, but the information is completely separated across coordinates. The exact sampling cost is therefore additive. This does not say that an unknown future target generally costs a factor $m$: the argument uses the fact that no observation can help with more than one target. When observations are informative about several targets, sequential design and controlled sensing no longer have the separable structure of Theorem~\ref{thm:linear-price} \parencite{chernoff1959sequential,nitinawarat2013controlled,kaufmann2016complexity}.

\section*{AI-assisted editing statement}
OpenAI's ChatGPT was used to assist with language editing, LaTeX cross-reference checking, bibliography verification, and a structured audit of the proofs. The author remains fully responsible for the content.

\section*{Funding}
Rianne de Heide's work was supported by NWO Veni grant number VI.Veni.222.018.

\printbibliography

@inproceedings{blackwell1951comparison,
  author    = {Blackwell, David},
  title     = {Comparison of Experiments},
  editor    = {Neyman, Jerzy},
  booktitle = {Proceedings of the Second Berkeley Symposium on Mathematical Statistics and Probability},
  year      = {1951},
  pages     = {93--102},
  publisher = {University of California Press},
  location  = {Berkeley, CA},
  doi       = {10.1525/9780520411586-009},
}

@article{blackwell1953equivalent,
  author       = {Blackwell, David},
  title        = {Equivalent Comparisons of Experiments},
  journaltitle = {The Annals of Mathematical Statistics},
  year         = {1953},
  volume       = {24},
  number       = {2},
  pages        = {265--272},
  doi          = {10.1214/aoms/1177729032},
}

@article{bahadur1954sufficiency,
  author       = {Bahadur, R. R.},
  title        = {Sufficiency and Statistical Decision Functions},
  journaltitle = {The Annals of Mathematical Statistics},
  year         = {1954},
  volume       = {25},
  number       = {3},
  pages        = {423--462},
  doi          = {10.1214/aoms/1177728715},
}

@article{lehmannscheffe1950completeness,
  author       = {Lehmann, E. L. and Scheff{\'e}, Henry},
  title        = {Completeness, Similar Regions, and Unbiased Estimation: Part I},
  journaltitle = {Sankhy\={a}: The Indian Journal of Statistics},
  year         = {1950},
  volume       = {10},
  number       = {4},
  pages        = {305--340},
  url          = {https://www.jstor.org/stable/25048038},
}

@article{lecam1964sufficiency,
  author       = {Le Cam, Lucien},
  title        = {Sufficiency and Approximate Sufficiency},
  journaltitle = {The Annals of Mathematical Statistics},
  year         = {1964},
  volume       = {35},
  number       = {4},
  pages        = {1419--1455},
  doi          = {10.1214/aoms/1177700372},
}

@book{torgersen1991comparison,
  author    = {Torgersen, Erik},
  title     = {Comparison of Statistical Experiments},
  year      = {1991},
  publisher = {Cambridge University Press},
  location  = {Cambridge},
  doi       = {10.1017/CBO9780511666353},
}

@book{wald1947sequential,
  author    = {Wald, Abraham},
  title     = {Sequential Analysis},
  year      = {1947},
  publisher = {John Wiley \& Sons},
  location  = {New York},
}

@article{waldwolfowitz1948optimum,
  author       = {Wald, Abraham and Wolfowitz, Jacob},
  title        = {Optimum Character of the Sequential Probability Ratio Test},
  journaltitle = {The Annals of Mathematical Statistics},
  year         = {1948},
  volume       = {19},
  number       = {3},
  pages        = {326--339},
  doi          = {10.1214/aoms/1177730197},
}

@article{chernoff1959sequential,
  author       = {Chernoff, Herman},
  title        = {Sequential Design of Experiments},
  journaltitle = {The Annals of Mathematical Statistics},
  year         = {1959},
  volume       = {30},
  number       = {3},
  pages        = {755--770},
  doi          = {10.1214/aoms/1177706205},
}

@article{kaufmann2016complexity,
  author       = {Kaufmann, Emilie and Capp{\'e}, Olivier and Garivier, Aur{\'e}lien},
  title        = {On the Complexity of Best-Arm Identification in Multi-Armed Bandit Models},
  journaltitle = {Journal of Machine Learning Research},
  year         = {2016},
  volume       = {17},
  number       = {1},
  pages        = {1--42},
  url          = {https://jmlr.org/papers/v17/kaufman16a.html},
}

@article{songfellouris2019sequential,
  author       = {Song, Yanglei and Fellouris, Georgios},
  title        = {Sequential Multiple Testing with Generalized Error Control: An Asymptotic Optimality Theory},
  journaltitle = {The Annals of Statistics},
  year         = {2019},
  volume       = {47},
  number       = {3},
  pages        = {1776--1803},
  doi          = {10.1214/18-AOS1737},
}

@article{blackwellramamoorthi1982bayes,
  author       = {Blackwell, David and Ramamoorthi, R. V.},
  title        = {A {Bayes} but Not Classically Sufficient Statistic},
  journaltitle = {The Annals of Statistics},
  year         = {1982},
  volume       = {10},
  number       = {3},
  pages        = {1025--1026},
  doi          = {10.1214/aos/1176345895},
}

@online{sevetlidis2026bayes,
  author       = {Sevetlidis, Vasileios},
  title        = {Bayes-Sufficient Representations in Supervised Learning},
  year         = {2026},
  eprint       = {2606.04045},
  eprinttype   = {arXiv},
  eprintclass  = {cs.LG},
  url          = {https://arxiv.org/abs/2606.04045},
}

@article{nitinawarat2013controlled,
  author       = {Nitinawarat, Sirin and Atia, George K. and Veeravalli, Venugopal V.},
  title        = {Controlled Sensing for Multihypothesis Testing},
  journaltitle = {IEEE Transactions on Automatic Control},
  year         = {2013},
  volume       = {58},
  number       = {10},
  pages        = {2451--2464},
  doi          = {10.1109/TAC.2013.2261188},
}

@article{benjamini2014selective,
  author       = {Benjamini, Yoav and Bogomolov, Marina},
  title        = {Selective Inference on Multiple Families of Hypotheses},
  journaltitle = {Journal of the Royal Statistical Society: Series B (Statistical Methodology)},
  year         = {2014},
  volume       = {76},
  number       = {1},
  pages        = {297--318},
  doi          = {10.1111/rssb.12028},
}

@article{gyorfi2023lossless,
  author       = {Gy{\"o}rfi, L{\'a}szl{\'o} and Linder, Tam{\'a}s and Walk, Harro},
  title        = {Lossless Transformations and Excess Risk Bounds in Statistical Inference},
  journaltitle = {Entropy},
  year         = {2023},
  volume       = {25},
  number       = {10},
  eid          = {1394},
  doi          = {10.3390/e25101394},
}

@article{berk2013valid,
  author       = {Berk, Richard and Brown, Lawrence and Buja, Andreas and Zhang, Kai and Zhao, Linda},
  title        = {Valid Post-Selection Inference},
  journaltitle = {The Annals of Statistics},
  year         = {2013},
  volume       = {41},
  number       = {2},
  pages        = {802--837},
  doi          = {10.1214/12-AOS1077},
}
\end{document}